\newtheorem{lemma}{Lemma}
\newtheorem{theorem}[lemma]{Theorem}
\newtheorem{definition}[lemma]{Definition}
\newtheorem{lem}[lemma]{Lemma}
\newtheorem{prop}[lemma]{Proposition}
\newtheorem{cor}[lemma]{Corollary}
\newtheorem{rem}[lemma]{Remark}
\newtheorem*{remark*}{Remark}
\def\S{{\cal S}}
\def\qed{\hfill $\vcenter{\hrule height .3mm
		\hbox {\vrule width .3mm height 2.1mm \kern 2mm \vrule width .3mm
			height 2.1mm} \hrule height .3mm}$ \bigskip}
\def\to{\rightarrow}
\newcommand{\iprod}[2]{\langle #1,#2 \rangle} 
\newcommand*\diff{\mathop{}\!\mathrm{d}}
\def\RR{\mathbb{R}}
\def\vol{{\rm Vol}}
\def\K{{\cal K}}
\def\FK{F_{c,\delta}}
\DeclareMathOperator*{\argmax}{arg\,max}
\DeclarePairedDelimiter{\parens}()
\newcounter{c}
\title{Some inequalities of isoperimetric type for the $c$-affine surface area}
\author{Shiri Artstein-Avidan, Arnon Chor \thanks{The authors are supported in part by the ERC under the European Union’s Horizon 2020 research and innovation programme (grant agreement no. 770127), by ISF grant Number 784/20, and by the Binational Science Foundation (grant no. 2020329).}}
\date{\today}
\begin{document}

\maketitle 
\begin{abstract}
    We study the $c$-affine surface area $\Omega^c(K)$, recently introduced by Sch\"{u}tt, Werner and Yalikun. We show that on the class of ball-bodies, $\Omega^c(K)$ is uniquely maximized by a ball of radius $\frac{n}{n+1}$, and that a Santal\'{o}-type inequality holds, and $\Omega^c(K)\Omega^c(K^c)$ is uniquely maximized be a Euclidean  ball of radius $1/2$. We also produce some more intricate inequalities involving the surface area.
\end{abstract}

In this short note we discuss a new notion reminiscent of the affine surface area of a convex body, which is relevant to the class of ball-bodies $\S_n$ (see \cite{AF-perrint, bezdek2024selected} and the references therein).
The class $\S_n$, which is a subset of the class $\K^n$ of all convex bodies in $\RR^n$ (i.e. compact, nonempty convex sets), consists of those convex bodies which are intersections of translates of the unit Euclidean ball in $\RR^n$. Equivalently, in addition to singletons, the class consists of convex bodies with generalized sectional curvature at least $1$ at every boundary point. In this note, in contrast with previous works, we exclude $\emptyset, \RR^n$ from $\S_n$. The unit Euclidean ball is denoted here by $B_2^n$. 

The class $\S_n$ (which is also sometimes called the $c$-class, because of the $c$-duality operation which we soon define) can also be characterized as the class of summands of $B_2^n$, namely bodies $K\in {\cal K}^n$ for which there exists $L\in {\cal K}^n$ with $K + L = B_2^n$. We have recently studied this class intensively \cite{AF-perrint}, emphasizing the role of the unique (up to trivial symmetries) order reversing bijection defined on this set, mapping a body $K\in \S_n$ to the unique $L\in \S_n$ with $K-L = B_2^n$. We also showed in \cite{ACF-preprint,artstein2025full} that it is the unique isometry (with respect to the Hausdorff metric) on this class, other than the identity map, again up to trivial symmetries. In geometric terms one may explicitly write this duality as 
\[ K^c = \bigcap_{x\in K} (x+B_2^n).\]
We note that for any $K \in \S_n$, $K^{cc} = K$.

Recently, Sch\"{u}tt, Werner and Yalikun in their paper ``Floating bodies for ball-convex bodies'' \cite{schuett2025floating} showed that in $\S_n$ there is a natural concept replacing the usual ``floating body'', where instead of considering half-spaces which cut off a portion at most $\delta$ of the volume, one intersects $1$-balls which do the same, thus forming a $c$-floating body which is in the class $\S_n$ and which converges to $K$ as $\delta\to 0$. We denote it here by $\FK  (K)\subset K$. Moreover, for bodies which have, at all points, sectional curvatures all strictly greater than $1$, they showed that the limit of the volume difference, scaled appropriately, exists, and computed a closed formula for it. They showed 
\begin{equation}\label{eq:asa-new} \lim_{\delta \to 0} \frac{\vol_n(K) - \vol_n(\FK (K))} {\delta^\frac{2}{n+1}}  = c_n \int_{\partial K} \prod _{i=1}^{n-1} \left( \kappa_i (K, x) -1 \right)^\frac{1}{n+1} \diff \mu_K(x)
\end{equation}
where $\diff \mu_K$ denotes the Lebesgue surface area measure on $\partial K$, $c_n = \frac12 \parens*{\frac{n+1}{\vol_{n-1}(B_2^{n-1})}}^{\frac{2}{n+1}}$ and $\kappa_i$ denote the sectional curvatures at the point $x\in \partial K$.

The expression \eqref{eq:asa-new} should be compared with the corresponding limit formula for the usual floating body, which gives, up to constant, the affine surface area of the body, given by 
\begin{equation} \Omega (K) = \int_{\partial K} \prod _{i=1}^{n-1}   \kappa_i (K, x)^\frac{1}{n+1} \diff \mu_K(x). 
\end{equation}

The conditions on the sectional curvatures of $K$ may seem restrictive. Nevertheless, for a convex body, almost every point on the boundary $\partial K$ is a ``normal'' point, namely the indicatrix and principal curvatures exist and are well defined (see \cite[Section 2.6]{schneider2013convex}). For a body in $\S_n$, these sectional curvatures are at least $1$. 
This fact, together with  \eqref{eq:asa-new}, motivated their definition for the functional, which we call $c$-affine-surface-area, although the class $\S_n$ itself is not affine invariant so the name is far from perfect; nevertheless it is a $c$-variant of the affine surface area. We mention that  in \cite{schuett2025floating} this functional is referred to as ``relative affine surface area'' and is denoted $as^R$.
\begin{definition}\label{def:asa-new}
Let $n \geq 2$ and $K\in \S_n$ which is not a point. Define the $c$-affine surface area of $K$ to be 
\begin{equation*}
\Omega^c(K) = \int_{\partial K} \prod _{i=1}^{n-1} \left( \kappa_i (K, x) -1 \right)^\frac{1}{n+1}  \diff \mu_K(x). 
\end{equation*}
\end{definition}

In what follows it will be convenient to introduce also the principal radii of curvature $r_i (u)$, defined as the non-zero eigenvalues (in increasing order) of $\nabla^2 h_K(u)$, the Hessian (in the sense of Alexandrov) of the support function $h_K(u) = \max_{y \in K} \iprod{y}{u}$. These are defined almost everywhere on $S^{n-1}$, see discussion in the Appendix and in \cite[Section 2.6 and Section 1.5, Note 3]{schneider2013convex}. It follows from the discussion in~\cite[Section 2.6, page 130]{schneider2013convex} or from~\cite{hug2002absolute} that on a set of full measure in $S^{n-1}$, $r_i(u) = \frac{1}{\kappa_i(x(u))}$ where $x(u)$ is the unique point in $\partial K$ with normal vector $u\in S^{n-1}$. 

Recall that for a normal point $x(u)$ the measure $\mu_K(x)$, pulled back to $S^{n-1}$, has density $\omega_n \prod_{i=1}^{n-1} r_i (u) \diff \sigma(u)$ for the normalized Haar measure $\sigma$ on $S^{n-1}$ and for $\omega_n = \vol_{n-1}(S^{n-1})$. In particular, letting $f_K(u)=\omega_n \prod _{i=1}^{n-1}   \kappa_i (K, x(u))^{-1} = \omega_n \prod _{i=1}^{n-1}r_i(u)$ for $u \in S^{n-1}$, we have $\diff \mu_K(u)=  f_K(u) \diff \sigma(u)$. Since normal points have full measure with respect to $\mu_K$ on $\partial K$, and since directions $u \in S^{n-1}$ where the support function $h_K$ is twice differentiable, in the sense of Alexandrov, also have full measure, throughout this note integration over $S^{n-1}$ should be interpreted as integration over the set of full measure $D_K \subseteq S^{n-1}$ of directions $u$ such that $x(\pm u)$ is normal in $K$ and also $h_K$ is twice differentiable at $u$. (See also the historical discussion in \cite[Section 10.5]{schneider2013convex} pertaining to Leichtweiss's generalization of the affine surface area to non-smooth bodies.)

Evidently we may replace the expression in Definition  
\ref{def:asa-new} by the following expression, which is easier to work with 
\begin{equation}\label{eq:omega-c-good-e}
\Omega^c(K) =
\omega_n \int_{S^{n-1}} \prod_{i=1}^{n-1}\left( 1 - {r_i (u)}\right)^\frac{1}{n+1} r_i^{\frac{n}{n+1}}(u)  \diff\sigma(u)
.
\end{equation}

\begin{rem}
In \cite{schuett2025floating} the authors actually consider for any $R>0$ the class of intersections of radius-$R$ balls, and have an $R$-version for the floating body and the affine surface area, which converges as $R\to \infty$ to the usual floating body and usual affine surface area. We chose to specialize to the case $R=1$ (as we did in \cite{AF-perrint}) since all results we consider can easily be rescaled.  
\end{rem}

Since clearly $\prod _{i=1}^{n-1} \left( \kappa_i (K, x) -1 \right)^\frac{1}{n+1}\le \prod _{i=1}^{n-1} \left( \kappa_i (K, x) \right)^\frac{1}{n+1}$, we have $\Omega^c(K) \le \Omega(K)$, and in particular the following version of an isoperimetric-type inequality for $\Omega^c$ was given in \cite{schuett2025floating}:
\begin{equation}\label{eq:affisop}
\Omega^c(K) \le \Omega (K)  \le n   \vol_n(B^n_2)^\frac{2}{n+1} \vol_n(K)^\frac{n-1}{n+1}.  
\end{equation}
We note that when $K$ is a very small ball then $r_i$ is small as well and we expect the inequality $\Omega^c(K) \le \Omega (K)$ to be close to being an equality. Since for a ball the second inequality in 
\eqref{eq:affisop} is equality, one does not expect to be able to improve the constant in \eqref{eq:affisop}. However, in contrast with the class $\K^n$, in the class 
  $\S_n$ one can consider the question of maximizing 
    $\Omega^c(K)$ 
  among all sets in $\S_n$, without a volume normalization. Indeed, it is easy to show that in $\S_n$, the body with largest $c$-affine surface area, is a ball of radius $1-\frac{1}{n+1}$.

  \begin{theorem}\label{thm:extremal-omegac}
Let $n\ge 2$. For any $K\in \S_n$ which is not a point, we have 
\[ \Omega^c(K) 
       \le \Omega^c \parens*{\frac{n}{n+1}B_2^n} ,\]
with equality if and only if $K = \frac{n}{n+1} B_2^n$.
\end{theorem}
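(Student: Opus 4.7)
The plan is to reduce Theorem~\ref{thm:extremal-omegac} to a pointwise optimization on $S^{n-1}$ using the explicit form~\eqref{eq:omega-c-good-e}. Concretely, for $K \in \S_n$ and $u \in S^{n-1}$ in the full-measure set where the principal radii $r_i(u)$ are defined, the fact that sectional curvatures of $K$ are at least $1$ (recall $K$ is an intersection of unit balls) translates into $r_i(u) = 1/\kappa_i(x(u)) \in [0,1]$. Hence every factor $(1-r_i(u))^{1/(n+1)} r_i(u)^{n/(n+1)}$ in the integrand of~\eqref{eq:omega-c-good-e} is well-defined and nonnegative, and it suffices to maximize the single-variable function
\[
\varphi(r) = (1-r)^{\frac{1}{n+1}} r^{\frac{n}{n+1}}, \qquad r \in [0,1],
\]
since the integrand factorizes as $\prod_i \varphi(r_i(u))$.

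A direct calculus computation shows that $\log \varphi$ has derivative $\frac{n}{(n+1)r} - \frac{1}{(n+1)(1-r)}$, which vanishes precisely at $r = \tfrac{n}{n+1}$; since $\varphi$ vanishes at the endpoints and is smooth on $(0,1)$, this is the unique maximum. Therefore
\[
\varphi(r) \le \varphi\!\left(\tfrac{n}{n+1}\right) = \left(\tfrac{1}{n+1}\right)^{\frac{1}{n+1}} \left(\tfrac{n}{n+1}\right)^{\frac{n}{n+1}},
\]
with equality iff $r = \tfrac{n}{n+1}$. Applying this to each of the $n-1$ factors in the integrand of~\eqref{eq:omega-c-good-e} and integrating over $S^{n-1}$ (noting that $\sigma$ is a probability measure), we conclude
\[
\Omega^c(K) \le \omega_n \left(\tfrac{1}{n+1}\right)^{\frac{n-1}{n+1}} \left(\tfrac{n}{n+1}\right)^{\frac{n(n-1)}{n+1}}.
\]

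The final step is to observe that this bound is attained at $K_0 = \tfrac{n}{n+1} B_2^n$: for this body the support function is radial and all radii satisfy $r_i(u) \equiv \tfrac{n}{n+1}$ on every $u \in S^{n-1}$, so~\eqref{eq:omega-c-good-e} evaluates to exactly the right-hand side above, giving $\Omega^c(K) \le \Omega^c(K_0)$. There is no serious obstacle here; the only care-point is justifying $r_i(u) \le 1$ on a set of full measure, which is precisely the content of the discussion before~\eqref{eq:omega-c-good-e} (equivalently, $K \in \S_n$ is a summand of $B_2^n$, so $\nabla^2 h_K \le \mathrm{Id}$ a.e.\ on $S^{n-1}$). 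Equality characterization also follows from the strict concavity argument: equality forces $r_i(u) = n/(n+1)$ for a.e.\ $u$ and every $i$, which by standard reconstruction from the radii of curvature identifies $K$ with $\tfrac{n}{n+1} B_2^n$ up to translation.
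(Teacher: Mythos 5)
Your proof is correct and takes essentially the same approach as the paper's first (``one-line'') proof: reduce to the explicit expression~\eqref{eq:omega-c-good-e}, observe that $r_i(u)\in[0,1]$ for $K\in\S_n$, and pointwise maximize $(1-r)^{1/(n+1)}r^{n/(n+1)}$ on $(0,1)$, which is achieved at $r=n/(n+1)$. You have simply spelled out the calculus and the equality case in more detail. (As a minor side remark, your computation $\Omega^c(rB_2^n) = \omega_n(1-r)^{(n-1)/(n+1)}r^{n(n-1)/(n+1)}$ is the correct one and is consistent with the paper's value $\omega_n/2^{n-1}$ at $r=1/2$; the paper's displayed formula in the remark following the theorem appears to be missing the exponent $n-1$.)
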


The proof of Theorem \ref{thm:extremal-omegac} is actually one line, once we consider the expression \eqref{eq:omega-c-good-e}. Indeed, point-wise it is true that $(1-r)^\frac{1}{n+1} r^\frac{n}{n+1}$ is maximized, over $(0,1)$, for $r = \frac{n}{n+1}$, and therefore also the integral of this expression is maximal for this specific radius. 
Below we shall present yet another argument for Theorem \ref{thm:extremal-omegac} which captures some more intricate properties of the quantities involved.

\begin{rem}
We note that by definition 
$
\Omega^c(rB_2^n) 
=\omega_n 
   \left( 1-r \right)^\frac{n-1}{n+1} r^{\frac{n(n-1)}{n+1}}  
$ so that for any $K\in \S_n$
\[ 
\Omega^c(K) \le \Omega^c \parens*{\frac{n}{n+1}B_2^n} 
= \frac{n^\frac{n(n-1)}{n+1}}{(n+1)^{n-1}} \cdot \omega_n .
\]
\end{rem}

Our main  interest in this note is to compare $\Omega^c(K)$ and $\Omega^c(K^c)$. It turns out that a Santal\'o-type inequality holds for this pair, and our main result is the following.

\begin{theorem}\label{thm:aff-c-santalo}
Let $n\ge 2$. For any $K\in \S_n$ which is not a point or a translate of $B_2^n$, we have 
 \begin{eqnarray*}
    \Omega^c(K) \Omega^c(K^c)  &\le&  \Omega^c \parens*{\frac12 B_2^n}^2,
\end{eqnarray*}   
with equality if and only if $K = \frac12 B_2^n$.
\end{theorem}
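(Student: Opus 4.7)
My plan is a two-step reduction: bound $\Omega^c(K)\Omega^c(K^c)$ by the product of surface areas $\vol_{n-1}(\partial K)\cdot\vol_{n-1}(\partial K^c)$ via Hölder's inequality, and then bound this product by $\omega_n^2/4^{n-1}$ via the Brunn--Minkowski inequality for the first quermassintegral. The setup step is a dual formula for $\Omega^c(K^c)$: the Minkowski identity $K + (-K^c) = B_2^n$ gives $h_{K^c}(u) = 1 - h_K(-u)$ on $S^{n-1}$; extending by $1$-homogeneity to $h_{K^c}(u) = |u| - h_K(-u)$ on $\RR^n$, differentiating twice and restricting to $u^\perp$ (where $\nabla^2|u|$ acts as the identity at $|u|=1$) yields $r_i(K^c, u) = 1 - r_i(K, -u)$ almost everywhere, up to reordering. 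Substituting into \eqref{eq:omega-c-good-e} and using antipodal invariance of $\sigma$ gives
\[\Omega^c(K^c) = \omega_n \int_{S^{n-1}} \prod_{i=1}^{n-1} r_i(u)^{1/(n+1)}(1-r_i(u))^{n/(n+1)}\diff\sigma(u).\]

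\textbf{Hölder step.} Let $A(u) = \prod_i r_i(u)$ and $B(u) = \prod_i(1-r_i(u))$, so that $\Omega^c(K) = \omega_n \int A^{n/(n+1)}B^{1/(n+1)}\,\diff\sigma$ and $\Omega^c(K^c) = \omega_n \int A^{1/(n+1)}B^{n/(n+1)}\,\diff\sigma$. Hölder's inequality with conjugate exponents $(n+1)/n$ and $n+1$ gives $\Omega^c(K) \le \omega_n(\int A\,\diff\sigma)^{n/(n+1)}(\int B\,\diff\sigma)^{1/(n+1)}$ and symmetrically for $\Omega^c(K^c)$. Since $\mu_K$ has $\sigma$-density $\omega_n A$, one has $\omega_n\int A\,\diff\sigma = \vol_{n-1}(\partial K)$, and by the dual formula $\omega_n\int B\,\diff\sigma = \vol_{n-1}(\partial K^c)$. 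Multiplying the two Hölder bounds, the exponents sum to $1$ in each of the two factors and
\[\Omega^c(K)\Omega^c(K^c) \le \vol_{n-1}(\partial K)\cdot\vol_{n-1}(\partial K^c).\]

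\textbf{Brunn--Minkowski step.} The Brunn--Minkowski inequality for the first quermassintegral $W_1 = \vol_{n-1}(\partial \cdot)/n$ (a classical consequence of Aleksandrov--Fenchel) gives $\vol_{n-1}(\partial(K+L))^{1/(n-1)} \ge \vol_{n-1}(\partial K)^{1/(n-1)} + \vol_{n-1}(\partial L)^{1/(n-1)}$. Applied with $L = -K^c$, and using $\vol_{n-1}(\partial(-K^c)) = \vol_{n-1}(\partial K^c)$ and $K + (-K^c) = B_2^n$, this yields
\[\omega_n^{1/(n-1)} \ge \vol_{n-1}(\partial K)^{1/(n-1)} + \vol_{n-1}(\partial K^c)^{1/(n-1)}.\]
AM--GM on the two nonnegative summands gives $\vol_{n-1}(\partial K)\cdot\vol_{n-1}(\partial K^c) \le \omega_n^2/4^{n-1}$. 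The direct computation $\Omega^c(\tfrac12 B_2^n) = \omega_n\phi(1/2)^{n-1} = \omega_n/2^{n-1}$ (with $\phi(r) = r^{n/(n+1)}(1-r)^{1/(n+1)}$) identifies this bound as $\Omega^c(\tfrac12 B_2^n)^2$, completing the proof.

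\textbf{Main obstacle to sidestep.} The tempting AM--GM applied directly to the pair $(\Omega^c(K), \Omega^c(K^c))$ is too weak: the pointwise sum $\prod\phi(r_i(u)) + \prod\phi(1-r_i(u))$ can strictly exceed $2\phi(1/2)^{n-1} = 2^{-(n-2)}$ for $n \ge 4$ even at constant radii near $1/2$ (for example, taking $r_i \equiv 0.55$ in dimension $n=4$), so one cannot simply integrate a pointwise sum bound. The essential structural move is to perform Hölder on each of the two integrals \emph{separately} via the factorization $F = A^{n/(n+1)}B^{1/(n+1)}$ versus $G = A^{1/(n+1)}B^{n/(n+1)}$, so that the exponents combine cleanly into a single product of surface areas; this transports the problem onto the pair $(\vol_{n-1}(\partial K), \vol_{n-1}(\partial K^c))$ subject to the additive constraint $K + (-K^c) = B_2^n$, where Brunn--Minkowski plus AM--GM close the argument.
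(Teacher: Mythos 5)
Your proof is correct, and the overall strategy — Hölder's inequality to pass to a product of surface areas, then a Brunn--Minkowski argument under the constraint $K+(-K^c)=B_2^n$ — is the same as the paper's. The execution differs in two ways, both of which streamline the argument. First, your Hölder split of $A^{n/(n+1)}B^{1/(n+1)}$ with conjugate exponents $\frac{n+1}{n}$ and $n+1$ yields $\Omega^c(K)\le S(K)^{n/(n+1)}S(K^c)^{1/(n+1)}$ in one step; this is exactly the paper's Lemma~\ref{lem:maintoo}, which the paper instead obtains by first proving the asymmetric bound $\Omega^c(K)\le S(K)^{(n-1)/n}\Omega^c(K^c)^{1/n}$ (Theorem~\ref{lem:mainlemthm}) with a different Hölder split $\varphi=(\varphi/\eta)\cdot\eta$ and then iterating. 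Multiplying your inequality with its dual gives the paper's Corollary~\ref{cor:omega-santalo-vs-S}, $\Omega^c(K)\Omega^c(K^c)\le S(K)S(K^c)$, so you bypass Theorem~\ref{lem:mainlemthm} entirely — a genuine simplification if the only goal is Theorem~\ref{thm:aff-c-santalo}, though the paper's Theorem~\ref{lem:mainlemthm} is of independent interest there. Second, for the final step the paper invokes the multiplicative Brunn--Minkowski form $S\bigl(\tfrac12 K_0+\tfrac12 K_1\bigr)\ge S(K_0)^{1/2}S(K_1)^{1/2}$, whereas you use the additive quermassintegral inequality followed by AM--GM; these are equivalent in effect, both producing $S(K)S(K^c)\le\bigl(\omega_n/2^{n-1}\bigr)^2=\Omega^c\bigl(\tfrac12 B_2^n\bigr)^2$. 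Your derivation of the dual radii-of-curvature formula from $h_{K^c}(u)=|u|-h_K(-u)$ is precisely the paper's Theorem~\ref{thm:chakerian-better-fromulated} (proved in its Appendix), so you are implicitly re-proving that input rather than citing it; that is fine. Your closing observation that the naive pointwise AM--GM on $\Phi$ fails for $n\ge 4$ matches the paper's concluding Remark and correctly identifies why the Hölder detour through surface areas is needed.
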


As we shall demonstrate below, Theorem \ref{thm:aff-c-santalo} follows from an inequality connecting the quantities
$\Omega^c(K), \Omega^c(K^c)$ and the usual surface areas $S(K),S(K^c)$. Here $S(K)$ denotes the Lebesgue surface area of the convex set $K$. In particular $S(K) = \int_{\partial K} \diff \mu_K = \omega_n \int_{S^{n-1}} \prod_{i=1}^{n-1}r_i(u) \diff\sigma(u)$. Before stating this inequality in Theorem \ref{lem:mainlemthm}, let us describe the main tool from the study of the $c$-duality and boundary structure on $\S_n$ which will be used to prove it.

\begin{theorem}\label{thm:chakerian-better-fromulated}
     Let $n \geq 2$, $K\in \S_n$ which is not a point or a translate of $B_2^n$ and assume $u \in S^{n-1}$ is such that $h_K$ is twice differentiable at $u$ and $h_{K^c}$ is twice differentiable at $-u$, in the sense of Alexandrov. Then, letting 
     $0\le r_1\le \cdots \le r_{n-1}\le 1$ and $0\le s_1\le \cdots \le s_{n-1}\le 1$ stand for the principal radii of curvature of $K$ in direction $u$ and of $K^c$ in direction $-u$, respectively, we have 
     \[ r_i + s_{n-i} = 1, \quad i=1, \ldots, n-1.\]
 \end{theorem}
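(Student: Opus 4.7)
The starting point is the characterization of $\S_n$ as the class of summands of $B_2^n$: for $K\in\S_n$ one has the Minkowski identity $K+(-K^c)=B_2^n$, so that $-K^c$ is the summand of $B_2^n$ complementary to $K$. Passing to support functions (and using $h_{-K^c}(u)=h_{K^c}(-u)$) this reads $h_K(u)+h_{K^c}(-u)=1$ on $S^{n-1}$. Extending by positive $1$-homogeneity gives the Euclidean identity
\[
 h_K(v) + h_{K^c}(-v) = |v|, \qquad v \in \RR^n \setminus \{0\},
\]
which is the form I plan to differentiate.

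Under the hypotheses of the theorem, $h_K$ admits a second-order Alexandrov expansion at $u$ and $h_{K^c}$ one at $-u$, so the composition $v\mapsto h_{K^c}(-v)$ has one at $u$ (the reflection $v\mapsto -v$ is a linear isometry, so its Jacobian squares to the identity). Sums of Alexandrov twice differentiable functions are Alexandrov twice differentiable with Hessian equal to the sum, and $v\mapsto|v|$ is smooth away from $0$ with Hessian $\tfrac{1}{|v|}(I - vv^T/|v|^2)$. Evaluating at $v=u\in S^{n-1}$ yields the operator identity
\[
 \nabla^2 h_K(u) + \nabla^2 h_{K^c}(-u) = I - u u^T,
\]
and since degree-one homogeneity already makes $u$ a null direction of both Hessians on the left, restricting to $u^\perp$ gives
\[
 A + B = I_{u^\perp}, \qquad A := \nabla^2 h_K(u)\big|_{u^\perp}, \quad B := \nabla^2 h_{K^c}(-u)\big|_{u^\perp}.
\]

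From here the argument is linear algebra. Both $A$ and $B$ are positive semidefinite (support functions are convex) and $A+B=I_{u^\perp}$, so $0\le A,B\le I_{u^\perp}$; in particular all eigenvalues lie in $[0,1]$, which already recovers the bounds $r_i,s_i\le 1$ asserted in the statement. Since $B=I_{u^\perp}-A$ commutes with $A$, the two operators share an orthonormal eigenbasis, and the eigenvalues of $B$ are $1$ minus those of $A$; if the sorted eigenvalues of $A$ are $r_1\le\cdots\le r_{n-1}$, the sorted eigenvalues of $B$ are $1-r_{n-1}\le\cdots\le 1-r_1$, which gives $s_i=1-r_{n-i}$, i.e.\ $r_i+s_{n-i}=1$.

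The single delicate point in this plan is the justification of the second-order identity under merely Alexandrov twice-differentiability: one needs additivity of second-order expansions together with the trivial chain rule for the reflection $v\mapsto -v$, both standard but worth spelling out given that the differentiability is only a pointwise, almost-everywhere statement; presumably the Appendix referenced by the paper supplies exactly this. Once that regularity bookkeeping is in place, the rest of the proof is purely the diagonalization of $A+B=I_{u^\perp}$, which is the cleanest possible source of the index-reversal phenomenon.
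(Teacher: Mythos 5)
Your proof is essentially the same as the paper's: both start from $h_K(x)+h_{K^c}(-x)=|x|$, sum the Alexandrov second-order expansions at $u$ and $-u$ against the known smooth expansion of $|\cdot|$ to obtain $\nabla^2 h_K(u)+\nabla^2 h_{K^c}(-u)=I-u\otimes u$, then restrict to $u^\perp$ and read off the index-reversed eigenvalue pairing. The paper simply carries out the Taylor-plus-limit bookkeeping explicitly rather than quoting additivity of Alexandrov expansions as a black box, but the argument is the same.
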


This theorem was given as Theorem 4.27 in \cite{AF-perrint} in the case where $h_K$ is twice \textit{continuously} differentiable, and generalizes a result by Bonnesen and Fenchel pertaining to a self dual body (i.e. a body of constant width). See the discussion in Bonnesen and Fenchel \cite[Chapter 15, Section 63 in page 128]{MR344997} and also Chakerian \cite{Chakerian}. Since the proof of Theorem~\ref{thm:chakerian-better-fromulated} is very similar to the proof of \cite[Theorem 4.27]{AF-perrint}, we defer it to the Appendix.

Using Theorem \ref{thm:chakerian-better-fromulated} we can prove our main tool, which we next state and prove.

\begin{theorem}\label{lem:mainlemthm}
Let $n\ge 2$ and let $K\in \S_n$ which is not a point or a translate of $B_2^n$. Then 
\[ \Omega^c(K) \le  S(K)^{\frac{n-1}{n}} \Omega^c (K^c)^{\frac1n} ,\]
with equality if and only if $K$ is a ball.
\end{theorem}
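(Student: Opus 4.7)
The plan is to reduce the inequality to a direct application of Hölder's inequality, after re-expressing all three quantities as integrals over $S^{n-1}$ involving only the principal radii of curvature $r_1(u)\le \cdots \le r_{n-1}(u)$ of $K$. From \eqref{eq:omega-c-good-e} we already have $\Omega^c(K)$ in this form, and the surface area is $S(K) = \omega_n \int_{S^{n-1}} \prod_{i=1}^{n-1} r_i(u)\,\diff\sigma(u)$. The first real step is therefore to express $\Omega^c(K^c)$ in terms of the radii of $K$.

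For this, let $s_1(v)\le \cdots \le s_{n-1}(v)$ denote the principal radii of curvature of $K^c$ at $v\in S^{n-1}$. Theorem \ref{thm:chakerian-better-fromulated} gives, at almost every direction, $s_j(-u) = 1 - r_{n-j}(u)$. Using the measure-preserving change of variable $v = -u$ together with the observation that the integrand in $\Omega^c(K^c)$ is symmetric in the index $j$, one obtains
\[
\Omega^c(K^c) \;=\; \omega_n \int_{S^{n-1}} \prod_{i=1}^{n-1} r_i(u)^{1/(n+1)}(1-r_i(u))^{n/(n+1)} \diff\sigma(u),
\]
which is precisely the $\Omega^c(K)$ integrand with the roles of $r_i$ and $1-r_i$ interchanged.

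The inequality then follows from the pointwise algebraic identity
\[
\prod_{i=1}^{n-1} (1-r_i)^{1/(n+1)} r_i^{n/(n+1)} \;=\; \left[\prod_{i=1}^{n-1} r_i^{1/(n+1)}(1-r_i)^{n/(n+1)}\right]^{1/n} \cdot \left[\prod_{i=1}^{n-1} r_i\right]^{(n-1)/n},
\]
which is a one-line exponent check (for each factor $(1-r_i)$ the exponent is $1/(n+1) = (1/n)\cdot n/(n+1)$; for $r_i$ one has $n/(n+1) = (1/n)\cdot 1/(n+1) + ((n-1)/n)\cdot 1$). Applying Hölder's inequality to the integral representation of $\Omega^c(K)$ with conjugate exponents $n$ and $n/(n-1)$, the factors of $\omega_n$ balance out because $1/n+(n-1)/n=1$, yielding exactly $\Omega^c(K) \le S(K)^{(n-1)/n}\Omega^c(K^c)^{1/n}$. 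Equality in Hölder requires the two integrands to be proportional, which happens when $K = rB_2^n$ with $0<r<1$, since in that case the radii $r_i(u)$ are identically $r$ and both integrands are constant.

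No step here is particularly delicate. The only point requiring care is the correct bookkeeping when converting $\Omega^c(K^c)$ to an integral in the radii of $K$, keeping track of the orientation reversal $u\mapsto -u$ and the order reversal $j\mapsto n-j$ prescribed by Theorem \ref{thm:chakerian-better-fromulated}, and confirming that this identification holds on a set of full $\sigma$-measure (which is ensured by the integration convention adopted in the paragraph preceding \eqref{eq:omega-c-good-e}). Once this substitution is set up, the rest is a routine pointwise decomposition and Hölder.
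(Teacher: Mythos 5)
Your proposal is correct and takes essentially the same route as the paper: after using Theorem~\ref{thm:chakerian-better-fromulated} and the change of variables $u\mapsto -u$ to express $\Omega^c(K^c)$ as an integral in the radii $r_i(u)$ of $K$, you split the $\Omega^c(K)$ integrand as a product with exponents $1/n$ and $(n-1)/n$ and apply H\"older, which is exactly the paper's decomposition $\varphi = (\varphi/\eta)\cdot\eta$ in disguise. The bookkeeping with $\omega_n$, the exponent check, and the equality case for balls all agree with the paper.
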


\begin{proof}
For   $u \in S^{n-1}$ where the points $x(\pm u)$ are both normal points of $\partial K$ and $\partial K^c$ respectively, and where $h_K$, $h_{K^c}$ are twice differentiable in the sense of Alexandrov at $u,-u$ respectively, let $\varphi(u) = \omega_n \prod_{i=1}^{n-1}   {(1-r_i(u))^{\frac{1}{n+1}}}({r_i (u)})^\frac{n}{n+1}$ and $\eta(u) = \parens*{\omega_n \prod_{i=1}^{n-1}   {(r_i(u))^{\frac{1}{n+1}}}(1-{r_i (u)})^\frac{n}{n+1}}^\frac1n$. By definition
\begin{eqnarray*}
\Omega^c(K) &=&  
\int_{S^{n-1}}\varphi(u) \diff\sigma(u)
\end{eqnarray*}
Using Theorem \ref{thm:chakerian-better-fromulated} we see that, after a change of variables $u \mapsto -u$, we have 
\begin{eqnarray*}
\Omega^c(K^c) &=&  
\omega_n \int_{S^{n-1}} \prod_{i=1}^{n-1}   {(r_i(u))^{\frac{1}{n+1}}}(1-{r_i (u)})^\frac{n}{n+1} \diff \sigma(u) = \int_{S^{n-1}} \eta^{n}(u) \diff\sigma(u) .
\end{eqnarray*}
We use the inequality of H\"older,
\begin{align*}
\int_{S^{n-1}}\varphi(u)  \diff\sigma(u) &= 
\int_{S^{n-1}} \frac{\varphi(u)}{\eta(u)} \cdot \eta(u) \diff\sigma(u) \\
&\leq
\left(\int_{S^{n-1}} \parens*{\frac{\varphi(u)}{\eta(u)}}^{\frac{n}{n-1}} \diff \sigma(u) \right)^{\frac{n-1}{n}} \left(\int_{S^{n-1}} \eta^{n}(u)  \diff\sigma(u) \right)^{\frac1n}. 
\end{align*}
By our choice of $\eta$ and $\varphi$, we see that 
\begin{eqnarray*}
\int_{S^{n-1}} \parens*{\frac{\varphi(u)}{\eta(u)}}^{\frac{n}{n-1}} \diff\sigma(u)
 &= &\int_{S^{n-1}} \left( \frac{ \omega _n \prod_{i=1}^{n-1}   {(1-r_i(u))^{\frac{1}{n+1}}}({r_i (u)})^\frac{n}{n+1}}{\omega_n^{1/n} \prod_{i=1}^{n-1}   {(r_i(u))^{\frac{1}{n(n+1)}}}(1-{r_i (u)})^\frac{1}{n+1}}
 \right)^{\frac{n}{n-1}} \diff\sigma(u)\\
   & = & \omega_n \int_{S^{n-1}}   \prod_{i=1}^{n-1}     {r_i (u)}  
  \diff \sigma(u) = S(K). 
 \end{eqnarray*}
Plugging into H\"older's inequality we    get 
\[ \Omega^c(K) \le S(K)^{\frac{n-1}{n}} \Omega^c(K^c)^{\frac1n}, \]
as claimed.

If we have equality, then we have equality in H\"older's inequality, which implies that $\frac{\varphi}{\eta}$ and $\eta$ are linearly dependent (as functions on $D_K$), which after a calculation implies that $\prod_{i=1}^{n-1} r_i(u)$ is constant on $D_K$, say $\prod_{i=1}^{n-1} r_i(u) \equiv R^{n-1}$, for some $R > 0$. Since $D_K$ has full measure, the surface area measures of $K$ and $RB_2^n$ are equal, and  by Minkowski's uniqueness theorem (see for example \cite[Theorem 8.1.1]{schneider2013convex} and the following paragraph), $K$ is a translate of $RB_2^n$.
\end{proof}

By multiplying the inequalities in Theorem \ref{lem:mainlemthm} and rearranging the terms we get 
\begin{cor}\label{cor:omega-santalo-vs-S}
Let $n\ge 2$. Then for any $K\in \S_n$ which is not a point or a translate of $B_2^n$, we have 
\[ \Omega^c(K)\Omega^c(K^c) \le  S(K)S(K^c),\]    
with equality if and only if $K$ is a ball.
\end{cor}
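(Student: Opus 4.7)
The plan is to apply Theorem \ref{lem:mainlemthm} twice: once to $K$ and once to $K^c$. Since $\S_n$ is closed under $c$-duality, and since $K \in \S_n$ is not a point or translate of $B_2^n$ if and only if $K^c$ has the same property (by \cite{AF-perrint}), Theorem \ref{lem:mainlemthm} applies to $K^c$ as well. Using $K^{cc} = K$, the second application yields
\[ \Omega^c(K^c) \le S(K^c)^{\frac{n-1}{n}} \Omega^c(K)^{\frac1n}. \]

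Next, I would multiply this with the original inequality $\Omega^c(K) \le S(K)^{\frac{n-1}{n}} \Omega^c(K^c)^{\frac1n}$ from Theorem \ref{lem:mainlemthm}. This produces
\[ \Omega^c(K) \Omega^c(K^c) \le \bigl( S(K) S(K^c) \bigr)^{\frac{n-1}{n}} \bigl( \Omega^c(K) \Omega^c(K^c) \bigr)^{\frac{1}{n}}. \]

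Finally, assuming $\Omega^c(K)\Omega^c(K^c) > 0$ (which holds since $K$ is not a ball of a fixed radius making either side degenerate, and otherwise the inequality is trivial), I would divide through by $\bigl(\Omega^c(K)\Omega^c(K^c)\bigr)^{1/n}$, leaving
\[ \bigl( \Omega^c(K)\Omega^c(K^c) \bigr)^{\frac{n-1}{n}} \le \bigl(S(K) S(K^c)\bigr)^{\frac{n-1}{n}}, \]
and then raise both sides to the $\frac{n}{n-1}$ power (valid since $n \ge 2$) to conclude. There is no real obstacle here; the whole argument is algebraic manipulation of the two symmetric applications of Theorem \ref{lem:mainlemthm}, with the only minor point being to verify that the degenerate cases (where $\Omega^c$ vanishes) do not cause division issues, which can be handled by noting that the inequality is trivially true when either side of the product is zero.
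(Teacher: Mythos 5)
Your proposal is correct and follows exactly the route the paper intends: multiply the two symmetric instances of Theorem \ref{lem:mainlemthm} (for $K$ and $K^c$), cancel the factor $\bigl(\Omega^c(K)\Omega^c(K^c)\bigr)^{1/n}$, and raise to the $\tfrac{n}{n-1}$ power. The paper simply states ``by multiplying the inequalities in Theorem \ref{lem:mainlemthm} and rearranging the terms,'' which is precisely your algebra, so there is no meaningful difference.
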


From Corollary \ref{cor:omega-santalo-vs-S} we can deduce Theorem \ref{thm:aff-c-santalo} easily, using that $K-K^c = B_2^n$ and that $S((1-t)K_0 + tK_1) \geq S(K_0 )^{1-t}S(K_1)^t$ for $K_0,K_1\in \K^n$ and $t\in (0,1)$ (which follows from Brunn-Minkowski's inequality for mixed volumes, see \cite[Theorem 20.4.1]{burago2013geometric} for example). 

\begin{proof}[Proof of Theorem \ref{thm:aff-c-santalo}]
    By Corollary~\ref{cor:omega-santalo-vs-S} we see that
    \[
        \Omega^c(K)\Omega^c(K^c) \le S(K)S(K^c)\le S \parens*{\frac12 (K-K^c)}^2= S \parens*{\frac12 B_2^n}^2 .
    \]
    If equality holds throughout, then it also holds in the first inequality, and then by Corollary~\ref{cor:omega-santalo-vs-S}, $K$ is a ball. Computing the functional in Theorem~\ref{thm:aff-c-santalo} for a ball $rB_2^n$, we see that $\Omega^c(rB_2^n) \Omega^c((1-r)B_2^n) = \omega_n^2 \parens*{r(1-r)}^{n-1}$, which can only equal $S \parens*{\frac12 B_2^n}^2 = \parens*{\frac{\omega_n}{2^{n-1}}}^2$ if $r = \frac12$. 
\end{proof}

Iterating the use of Theorem \ref{lem:mainlemthm} also produces the following lemma which is the key ingredient in our second proof of 
Theorem \ref{thm:extremal-omegac}, but can also be seen (given the Proposition \ref{prop:lastprop} which follows) as a strengthening of it.  

\begin{lem}\label{lem:maintoo}
Let $n\ge 2$ and let $K\in \S_n$ which is not a point or a translate of $B_2^n$. Then  
\[ \Omega^c(K) 
       \le
       S(K )^{\frac{n}{n+1} } 
       S(K^c)^{\frac1{n+1}}, \]
       with equality if and only if $K$ is a ball. 
\end{lem}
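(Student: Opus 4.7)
The plan is to apply Theorem~\ref{lem:mainlemthm} once to $K$ and once to $K^c$, then substitute one into the other and solve for $\Omega^c(K)$. Since $K^{cc}=K$ and the $c$-duality on $\S_n$ exchanges points with translates of $B_2^n$, the hypothesis ``$K$ is not a point or translate of $B_2^n$'' is preserved under $K\mapsto K^c$, so both applications are legitimate.

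Concretely, Theorem~\ref{lem:mainlemthm} yields the two estimates
\[
\Omega^c(K)\le S(K)^{(n-1)/n}\,\Omega^c(K^c)^{1/n}\qquad\text{and}\qquad \Omega^c(K^c)\le S(K^c)^{(n-1)/n}\,\Omega^c(K)^{1/n}.
\]
Substituting the second into the first gives
\[
\Omega^c(K)\le S(K)^{(n-1)/n}\,S(K^c)^{(n-1)/n^2}\,\Omega^c(K)^{1/n^2}.
\]
Assuming $\Omega^c(K)>0$ (else the inequality is trivial), I isolate $\Omega^c(K)$ by raising both sides to the reciprocal power $n^2/(n^2-1)=n^2/((n-1)(n+1))$. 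A short computation of the exponents,
\[
\tfrac{n-1}{n}\cdot\tfrac{n^2}{(n-1)(n+1)}=\tfrac{n}{n+1},\qquad \tfrac{n-1}{n^2}\cdot\tfrac{n^2}{(n-1)(n+1)}=\tfrac{1}{n+1},
\]
delivers exactly the asserted bound $\Omega^c(K)\le S(K)^{n/(n+1)}S(K^c)^{1/(n+1)}$. One could instead iterate the substitution $k$ times, keeping an $\Omega^c(K)^{1/n^{2k}}$ factor on the right and summing a geometric series $\sum_{j\ge0} n^{-2j}=n^2/(n^2-1)$ to obtain the same limit, which I suspect is the source of the word ``iterating.''

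For equality, on a ball $K=rB_2^n$ with $0<r<1$ the principal radii of $K$ and $K^c$ are the constants $r$ and $1-r$, so the H\"older step in the proof of Theorem~\ref{lem:mainlemthm} is tight for both $K$ and $K^c$; consequently both substituted estimates are equalities and so is the combined one. A direct verification confirms this: both sides equal $\omega_n\, r^{n(n-1)/(n+1)}(1-r)^{(n-1)/(n+1)}$. There is no serious analytic obstacle beyond Theorem~\ref{lem:mainlemthm}; the only things to verify are the exponent bookkeeping and that the rearrangement step is legitimate, i.e.\ that $\Omega^c(K)$ is finite (true for any $K\in\S_n$ not a point) and positive (the zero case being trivial).
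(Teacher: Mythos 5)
Your proof is correct and follows the same route as the paper's: apply Theorem~\ref{lem:mainlemthm} to both $K$ and $K^c$, substitute one estimate into the other, and isolate $\Omega^c(K)$ by raising to the power $n^2/(n^2-1)$ (the paper phrases this loosely as ``taking power $n$ and rearranging''). Your extra remarks — that $c$-duality swaps points with translates of $B_2^n$ so the hypothesis is stable under $K\mapsto K^c$, and that $\Omega^c(K)$ is finite and positive so the rearrangement is legitimate — are small but worthwhile details that the paper leaves implicit.
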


\begin{proof}
We use Theorem \ref{lem:mainlemthm} for both $K$ and $K^c$, recalling that $K^{cc} = K$, to write
\begin{eqnarray*}
    \Omega^c(K) &\le&  S(K)^{\frac{n-1}{n}} \Omega^c (K^c)^{\frac1n}\\
    \Omega^c(K^c) &\le&   S(K^c)^{\frac{n-1}{n}} \Omega^c (K)^{\frac1n}.
\end{eqnarray*}
Plugging the second into the first we get  
\begin{eqnarray*}
    \Omega^c(K) 
       &\le& S(K )^{\frac{n-1}{n}} 
      \left(S(K^c)^{\frac{n-1}{n}}\Omega^c (K)^{\frac1n}\right)^{\frac1n}.
\end{eqnarray*} 
Taking power $n$ and rearranging we end up with 
\begin{eqnarray*}
       \Omega^c(K)
       &\le&  S(K )^{\frac{n}{n+1} } 
       S(K^c)^{\frac1{n+1}} 
\end{eqnarray*}
as claimed. Since balls are the only equality cases in Theorem \ref{lem:mainlemthm}, they also are also the unique equality cases here.
\end{proof}

Finally we provide the following simple consequence of Alexandrov's inequality. Recall that Alexandrov's inequality, connecting various mixed volume ratios  (see, e.g. \cite[Theorem 1.1.11]{AGMBook}), states in particular that $\parens*{\frac{S(K)}{S(B_2^n)}}^{\frac{1}{n-1}} \le M^*(K) = \int_{S^{n-1}} h_K(u) \diff \sigma(u)$.

\begin{prop}\label{prop:lastprop}
Let $n \geq 2$. For any $K \in \S_n$,
  \[ S(K )^{\frac{n}{n+1} } 
       S(K^c)^{\frac1{n+1}} \leq S \parens*{\frac{n}{n+1} B_2^n}^\frac{n}{n+1} S \parens*{\frac{1}{n+1} B_2^n}^\frac{1}{n+1} .
    \]     
    Additionally, if there is equality in the inequality above, then $ K  = \frac{n}{n+1}B_2^n$. 
\end{prop}

\begin{proof}
By Alexandrov's inequality we have  
  \begin{eqnarray*}
S(K )^{n} 
       S(K^c) &\le& S(B_2^n)^{n+1} \left( M^*(K)^{n} M^*(K^c)    \right)^{n-1}. 
\end{eqnarray*}
Since $K - K^c = B_2^n$ we see that $M^*(K^c) = 1- M^*(K)$. Thus 
    \begin{eqnarray*}
S(K )^{n} 
       S(K^c) &\le&    
        S(B_2^n)^{n+1} \left( M^*(K)^{n} (1-M^*(K))   \right)^{n-1}
\\
       & \le & S(B_2^n)^{n+1} \left( \max_{r\in (0,1)} r^n (1-r)   \right)^{n-1}\\
       &= & S(B_2^n)^{n+1}\left(\frac{n^n}{(n+1)^{n+1}}\right)^{n-1}  = S \parens*{\frac{n}{n+1}B_2^n}^n S \parens*{\frac1{n+1} B_2^n},  
  \end{eqnarray*} 
completing the proof. If equality holds throughout, we must have equality in Aleandrov's inequality for mixed volumes, which implies $K$ is a Euclidean ball. Additionally, it means $M^*(K)^n (1 - M^*(K)) = \max_{r \in (0,1)} r^n (1-r)$, which means $M^*(K) = \argmax_{r \in (0,1)} r^n (1-r) = \frac{n}{n+1}$, completing the proof of the equality case. 
\end{proof}

\begin{proof}[Second Proof of Theorem \ref{thm:extremal-omegac}]
    It is a direct consequence of Lemma \ref{lem:maintoo} with Proposition \ref{prop:lastprop}, including the equality case.
\end{proof}

\begin{rem}
Considering the one-line proof of Theorem \ref{thm:extremal-omegac} presented at the beginning, one might expect a similar proof for Theorem \ref{thm:aff-c-santalo}. One way to go about this is to consider the expression $F(K) = \int_{S^{n-1}} \prod_{i=1}^{n-1} r_i(u)^{\alpha_i} (1-r_i (u))^{\beta_i} d\mu(u)$ where $\mu$ is any measure. If  $\alpha_i, \beta_i >0$ are chosen so that 
\[ \Phi(r_1, \ldots , r_{n-1}) =\frac12\left(\prod_{i=1}^{n-1} r_i^{\alpha_i} (1-r_i)^{\beta_i} + \prod_{i=1}^{n-1} (1-r_i)^{\alpha_i} r_i^{\beta_i}\right)\]
is maximized at the point $\bold{\frac12} = (\frac12, \frac12,\ldots ,\frac12)$, we  get the additive inequality
\[  \frac{F(K)  + F(K^c)}{2} = \int_{S^{n-1}}\Phi ((r_i (u))_{i=1}^{n-1})d\mu(u)\le \int_{S^{n-1}}\Phi \parens*{{\bold{\frac12}}} d\mu(u)  = F \parens*{\frac12 B_2^n}.
\]
(Which can be combined with $\sqrt{F(K)\cdot F(K^c)} \le \frac{F(K)  + F(K^c)}{2} $ of course.)
However, in the case which we have, namely $\alpha_i = \frac{1}{n+1}$ and $\beta_i = \frac{n}{n+1}$, 
the function $\Phi$ is not maximized at ${\bold {\frac{1}{2}}}$. Indeed, even considering only balls, taking ${\bf r} = (r, r, \ldots, r)$ we get  
\[
f(r) = 2\Phi({\bold r}) = r^{\frac{n(n-1)}{n+1}}(1-r)^{\frac{n-1}{n+1}} + r^{\frac{n-1}{n+1}}(1-r)^{\frac{n(n-1)}{n+1}}=r^a(1-r)^b + r^b (1-r)^a, 
\]
which, differentiating twice and plugging $r = \frac12$, gives
\[ f'' \parens*{\frac12} = \left( \frac12\right)^{a+b-3} \left((a-b)^2 -(a+b)\right).\]
 In our case,  $a - b = \frac{(n-1)^2}{n+1}$ and $a+b = n-1$ we see that for $n \geq 4$ the second derivative at $\frac12$ is positive, and so it is not a maximum. In other words, there is a ball $rB_2^n$ of some radius $r$ close to $\frac12$ with
 \[ \frac12 (\Omega^c (rB_2^n)  + \Omega^c ((rB_2^n)^c) ) > \Omega^c \parens*{\frac{1}{2}B_2^n} . \]
\end{rem}

\section*{\normalsize{Appendix: Proof of Theorem~\ref{thm:chakerian-better-fromulated}}}\label{sec:Appendix}

Let $K\in \S_n$ and assume $u\in S^{n-1}$ is such that $h_K$ is twice differentiable at $u$ and $h_{K^c}$ is twice differentiable at $-u$ in the sense of Alexandrov. Namely, a second order Taylor approximation of $h_K$ holds at $u$:
\[
    h_K(u+\delta v) = h_K(u) + \delta \nabla h_K(u)\cdot v + \frac{\delta^2}{2}v^T\nabla^2h_K(u)v+o(\delta^2) ,
\]
as $\delta \to 0$, and a similar Taylor approximation holds for $h_{K^c}(-u - \delta v)$. We used $\nabla^2 h_K(u)$ to denote the Hessian, according to Alexandrov. If $h_K$ is twice differentiable in the usual sense $\nabla^2 h_K(u)$ coincides with the usual Hessian. 

As $K\in \S_n$ is strictly convex, we have that  $h_K\in C^1(\RR^n \setminus \{0\})$. By definition of the $c$-dual, we know that for every $x \in \RR^n \setminus \{0\}$, $h_K(x)+h_{K^c}(-x) = |x|$ and therefore, differentiating,
\begin{equation}\label{eq:nablahhc}
    \nabla h_K(x) - \nabla h_{K^c}(-x) = \frac{x^T}{|x|} .
\end{equation} 

Since $h_K$ is twice differentiable at $u$ in the sense of Alexandrov, we can relate $\nabla^2 h_K(u)$ and $\nabla^2 h_{K^c}(-u)$ as follows.

\noindent Using $h_K(x) + h_{K^c}(-x) = |x|$, we see that
\begin{eqnarray*}
    |u+\delta v| &=& h_K(u+\delta v) + h_{K^c}(-u-\delta v) \\&=& 
    h_K(u) + \delta \nabla h_K(u)\cdot v + \frac{\delta^2}{2}v^T\nabla^2h_K(u)v \\&& +
    h_{K^c}(-u) - \delta \nabla h_{K^c}(-u)\cdot v + \frac{\delta^2}{2}v^T\nabla^2h_{K^c}(-u)v +
    o(\delta^2)\\
    & = & |u|+\delta (\nabla h_K(u)-\nabla h_{K^c}(-u))\cdot v+\frac{\delta^2}{2}v^T \parens*{\nabla^2h_K(u)+\nabla^2h_{K^c}(-u)} v+o(\delta^2) .
\end{eqnarray*}
Rearranging,
\begin{eqnarray*}
  \frac{  |u+\delta v|  - |u|}{\delta}  &=& (\nabla h_K(u)-\nabla h_{K^c}(-u))\cdot v+\frac{\delta}{2}v^T \parens*{\nabla^2h_K(u)+\nabla^2h_{K^c}(-u)} v+o(\delta) .
\end{eqnarray*}
Using the first and second order behavior of $|u+\delta v|$, we see
\begin{eqnarray*}
\frac12 v^T \parens*{\nabla^2 h_K(u) + \nabla^2 h_{K^c}(-u)} v = \lim_{\delta\to 0}  \frac{  |u+\delta v|  - |u| - \delta\frac{u^Tv}{|u|}}{\delta^2}  &=& \frac12 \frac{v^T}{|u|}\left(I - \frac{u}{|u|}\otimes \frac{u}{|u|}\right)v .
\end{eqnarray*}
Seeing $v \in \RR^n$ is arbitrary,
\begin{equation}\label{eq:hesses}
    \nabla^2 h_K(u) + \nabla^2 h_{K^c}(-u) =   \frac{1}{|u|}\left(I - \frac{u}{|u|}\otimes \frac{u}{|u|}\right) .
\end{equation}

Note that the matrix $\nabla^2h_K(u)$ is symmetric, and since $h_K$ is $1$-homogeneous, there is one eigenvector of eigenvalue $0$, which is in direction $u$. The other eigenvectors are in $u^\perp$, and are shared by $\nabla^2h_K(u)$ and $\nabla^2h_{K^c}(-u)$. By \eqref{eq:hesses}, we see that for any such eigenvector, the corresponding eigenvalues of $\nabla^2 h_K(u)$ and $\nabla^2 h_{K^c}(-u)$ sum to 1. Denoting by $r_1 \leq \ldots \leq r_{n-1}$ the eigenvalues of $\nabla^2 h_K(u)$ in increasing order, and by $s_1 \leq \ldots \leq s_{n-1}$ the eigenvalues of $\nabla^2 h_{K^c}(-u)$ in increasing order, we see indeed that $r_i$ and $s_{n-i}$ correspond to the same eigenvector and so $r_i + s_{n-i} = 1$.

\bibliographystyle{plain}

\end{document}